\documentclass[a4paper,12pt,reqno]{amsart}
\usepackage{graphicx}

\usepackage{amsmath,amstext,amssymb,amsopn,amsthm}
\usepackage{url}

\usepackage[margin=30mm]{geometry}
\usepackage{eucal,mathrsfs,dsfont}%gives nicer set-names. Use \mathds instead of \mathds
\usepackage{soul}

\newtheorem{theorem}{Theorem}[section]

\newtheorem{lemma}[theorem]{Lemma}

\theoremstyle{definition}

\newtheorem{remark}{Remark}

\title[Multiple solutions for BVPs with fractional Laplacian]
      {Multiple solutions for Dirichlet nonlinear BVPs involving fractional Laplacian}

\author[Tadeusz Kulczycki and Robert Sta\'nczy]{Tadeusz Kulczycki and Robert Sta\'nczy}

\subjclass[2010]{Primary: 35Q, 35J65, 82B05.}

 \keywords{superlinear BVPs, elliptic equation, fractional Laplacian, multiple solutions.}
 
 \address{Tadeusz Kulczycki, Instytut Matematyki i Informatyki, Politechnika Wroc{\l}awska, ul. Wybrze\.ze Wyspia\'nskiego 27, 50-370 Wroc{\l}aw, Poland}
 \email{Tadeusz.Kulczycki@pwr.wroc.pl}
 
 \address{Robert Sta{\'n}czy, Instytut Matematyczny, Uniwersytet Wroc{\l}awski, pl. Grunwaldzki 2/4, 50-370 Wroc{\l}aw, Poland}
 \email{stanczr@math.uni.wroc.pl}

\thanks{R. Sta{\'n}czy has been partially supported by the Polish Ministry of Science project N N201 418839. T. Kulczycki has been partially supported by NCN grant no. 2011/03/B/ST1/00423.}

\begin{document}

\begin{abstract}
The existence of at least two solutions to superlinear integral equation in cone is proved using
the Krasnosielskii Fixed Point Theorem. The result is applied to the 
Dirichlet BVPs with the fractional Laplacian. 
\end{abstract}

\maketitle

\section{Introduction and motivation}

It is well known that the superlinear equation with $p>1$ on the real line
\begin{equation}\label{qur}
u=bu^p+u_0
\end{equation}
can have none, one or more solutions $u$ depending on 
the data $b>0$ and $u_0\ge 0$. 
For example, if we additionally assume that
\begin{equation}\label{qco}
bu_0^{p-1}<c_p
\end{equation}
for some constant 
\begin{equation}\label{cp}
c_p=\left((p-1)^\frac{1-p}{p}+(p-1)^\frac1p\right)^{-p}
\end{equation}
then the existence of at least two nonnegative solutions of (\ref{qur}) is guaranteed, since thus the minimum of the function $bu^{p-1}+u_0 u^{-1}$ is ascertained
to be smaller than the constant $1$.

In this paper we would like to show that this simple observation 
can be generalized if we replace power term $bu^p$ defined on the real line
with a power like nonlinearity in a Banach space under some additional, suitable conditions like coercivity and compactness on some cone in this Banach space. More 
specifically, we shall consider the equation in the cone $P$ in the 
Banach space $E$ with the norm $|\cdot|$ in the form
\begin{equation}\label{ebf}
u=B(u)+u_0
\end{equation}
for some given element $u_0\in P$ and p-power, coercive and compact  
form $B$ defined on $P$.
The assumption (\ref{qco}) guaranteeing the existence of at least two solutions 
for the quadratic equation (\ref{qur}) now has to be adequately rephrased for (\ref{ebf})  
as
\begin{equation}\label{cbf}
b|u_0|^{p-1}<c_p\
\end{equation}
where $b>0$ denotes the best estimate such that for any $u\in P$
\begin{equation}\label{Bpe}
|B(u)|\le b|u|^p\,.
\end{equation}
Our main theoretical tool for the application to the superlinear integral equations and the BVPs with the fractional Laplacian is the following theorem.
\begin{theorem}
\label{main}
Assume that, for any given cone $P\subset E$, a compact mapping $B:P\rightarrow P$ satisfies the following condition
\begin{equation}\label{coe}
a|u|^p\le |B(u)|\le b|u|^p\,,
\end{equation}
for some $b > a > 0$.
Then for any $u_0\in P$ as small as to satisfy (\ref{cbf}) the equation (\ref{ebf}) 
admits at least two solutions in P.
\end{theorem}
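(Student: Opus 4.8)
The plan is to recast solutions of (\ref{ebf}) as fixed points of the operator $T\colon P\to P$, $T(u)=B(u)+u_0$, and to extract two of them from the Krasnosielskii cone compression--expansion theorem applied on two adjacent annular regions of $P$. First I would check that $T$ really maps the cone into itself, which is immediate since $B(u),u_0\in P$ and a cone is closed under addition, and that $T$ is completely continuous, the compactness of $B$ being preserved under translation by the fixed vector $u_0$. After that, everything is driven by three scalar estimates for $|T(u)|$ in terms of $r=|u|$, obtained from (\ref{coe}) together with the ordinary and reverse triangle inequalities:
\[
|T(u)|\le |B(u)|+|u_0|\le b|u|^p+|u_0|,
\]
\[
|T(u)|\ge |u_0|-|B(u)|\ge |u_0|-b|u|^p,\qquad |T(u)|\ge |B(u)|-|u_0|\ge a|u|^p-|u_0|.
\]

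The second ingredient is the scalar majorant $\Phi(r)=br^p+|u_0|$, which reproduces the heuristic of the introduction. I would show that condition (\ref{cbf}) is precisely the statement that $\min_{r>0}\bigl(br^{p-1}+|u_0|r^{-1}\bigr)<1$, i.e.\ that the equation $\Phi(r)=r$ has two positive roots $r_1<r_2$ with $\Phi(r)<r$ on $(r_1,r_2)$. The minimizer is $r_\ast=(|u_0|/(b(p-1)))^{1/p}$, the minimal value equals $\tfrac{p}{p-1}\bigl(b(p-1)\bigr)^{1/p}|u_0|^{(p-1)/p}$, and this is $<1$ exactly when $b|u_0|^{p-1}<(p-1)^{p-1}/p^{p}$, which is the value $c_p$ of (\ref{cp}) after a direct algebraic identification. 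Assuming $u_0\neq0$ (the degenerate case being trivial), I then fix radii $0<\rho_1<\rho_2<\rho_3$ as follows: $\rho_2\in(r_1,r_2)$, so that $|T(u)|\le\Phi(\rho_2)<\rho_2$ on $|u|=\rho_2$; $\rho_1$ so small that $\rho_1+b\rho_1^{\,p}<|u_0|$, which is possible as the left side tends to $0$ with $\rho_1$; and $\rho_3$ so large that $a\rho_3^{\,p}-|u_0|>\rho_3$, which is possible because $p>1$.

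On the inner sphere $|u|=\rho_1$ the bound $|T(u)|\ge|u_0|-b\rho_1^{\,p}>\rho_1$ gives expansion, and the smallness condition also forces $\rho_1<|u_0|<r_1<\rho_2$, so the ordering of the radii is automatic. On $|u|=\rho_3$ the bound $|T(u)|\ge a\rho_3^{\,p}-|u_0|>\rho_3$ gives expansion as well. Applying the Krasnosielskii theorem on the annulus $\{\rho_1\le|u|\le\rho_2\}$ (expansion inside, compression outside) yields a fixed point $u_1$, and on $\{\rho_2\le|u|\le\rho_3\}$ (compression inside, expansion outside) yields a fixed point $u_2$. Since the inequality on $|u|=\rho_2$ is \emph{strict}, no fixed point can lie on that sphere, whence $|u_1|<\rho_2<|u_2|$ and the two solutions of (\ref{ebf}) are genuinely distinct.

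The main obstacle is the lower control of $|T(u)|=|B(u)+u_0|$: in an abstract cone the vectors $B(u)$ and $u_0$ need not add constructively in norm, so one cannot simply write $|T(u)|\ge a|u|^p+|u_0|$. The reverse triangle inequality rescues both expansion estimates, but at the cost of the asymmetric small-radius requirement $\rho_1+b\rho_1^{\,p}<|u_0|$; the point to verify carefully is that this is compatible with $\rho_1<\rho_2$, which holds because it already forces $\rho_1<|u_0|<r_1$. The only other care needed is the exact matching of (\ref{cbf}) with the two-root condition for $\Phi$ and the strict-inequality argument at $\rho_2$ guaranteeing that $u_1\neq u_2$.
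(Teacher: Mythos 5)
Your proposal is correct and follows essentially the same route as the paper's proof: recasting (\ref{ebf}) as a fixed point problem for $T(u)=B(u)+u_0$, deriving the intermediate radius $\rho_2$ from the scalar minimization of $br^{p-1}+|u_0|r^{-1}$ (your identification $c_p=(p-1)^{p-1}/p^p$ agrees with (\ref{cp})), and applying the Krasnosielskii theorem twice on the annuli $\rho_1\le|u|\le\rho_2$ and $\rho_2\le|u|\le\rho_3$. Your observation that the \emph{strict} inequality $|T(u)|<|u|$ on the sphere $|u|=\rho_2$ already forces $|u_1|<\rho_2<|u_2|$ is a small but genuine tidying of the paper's vaguer remark that one ``might need to distinguish between $\rho_2$ used in both sets.''
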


As a direct but nontrivial application of this result we shall obtain among other applications a multiplicity result for the following superlinear boundary value problem involving the one-dimensional fractional Laplacian.
\begin{eqnarray}
\label {frac1}
(-\Delta)^{\alpha/2} u(x)&=&(u(x))^p+h(x)\,, \;\; {\rm for} \;\; x\in (-1,1),\\
\label{frac2}
u(x)&=&0, \quad \quad \quad \quad \quad \quad \;\; {\rm for} \;\; |x|\ge 1.
\end{eqnarray}

We shall denote by $G_{(-1,1)}$ both the Green function and the Green operator corresponding to the Dirichlet linear problem on $(-1,1)$ for the fractional Laplacian (see Preliminaries). We say that $u: [-1,1] \to [0,\infty)$ is symmetric and unimodal on $[-1,1]$ iff $u(x) = u(-x)$ for all $x \in [-1,1]$, $u$ is nondecreasing on $[-1,0]$ and nonincreasing on $[0,1]$. $BC([-1,1])$ denotes the space of all bounded continuous functions $f:[-1,1] \to \mathbb R$ with the standard supremum norm over the interval $[-1,1]$.

\begin{theorem}
\label{multifractional}
Let $\alpha \in (1,2)$, $p > 1$ and $h \in BC([-1,1])$ be a nonnegative, symmetric and unimodal function on $[-1,1]$. Assume also that (\ref{cbf}) is satisfied where $u_0 = G_{(-1,1)} h$ and $B u = G_{(-1,1)} u^p$.
Then there exist at least two nonnegative weak solutions to the boundary value problem (\ref{frac1}-\ref{frac2}). Morevoer, if $h$ is regular enough, i.e. $h\in C^\gamma (-1,1)$ with $\gamma>2-\alpha$ then the solutions are classical. 
\end{theorem}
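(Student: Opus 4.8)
The plan is to recast the boundary value problem (\ref{frac1})--(\ref{frac2}) as a fixed point equation of the abstract form (\ref{ebf}) and then verify the hypotheses of Theorem \ref{main}. Applying the Green operator $G_{(-1,1)}$ to (\ref{frac1}) and using the homogeneous exterior condition (\ref{frac2}), a nonnegative function $u$ is a weak solution precisely when
\begin{equation*}
u = G_{(-1,1)} u^p + G_{(-1,1)} h = Bu + u_0,
\end{equation*}
so that weak solutions of (\ref{frac1})--(\ref{frac2}) are exactly the solutions of (\ref{ebf}) in the chosen cone. I would take $E = BC([-1,1])$ with the supremum norm and let $P$ be a cone of nonnegative, symmetric, unimodal functions; since a single fixed point equation already yields the multiplicity, the whole task reduces to checking that $B$ and $u_0$ satisfy the assumptions of Theorem \ref{main} on $P$.

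First I would establish the invariance $B : P \to P$. Nonnegativity of $Bu = G_{(-1,1)} u^p$ follows from positivity of the Dirichlet Green function, and symmetry follows from the invariance $G_{(-1,1)}(x,y) = G_{(-1,1)}(-x,-y)$ of the kernel under the reflection $x \mapsto -x$ together with the symmetry of $u^p$. The genuinely delicate structural point is that $G_{(-1,1)}$ preserves unimodality: one must show that $x \mapsto \int_{-1}^1 G_{(-1,1)}(x,y) u^p(y)\,dy$ is nonincreasing on $[0,1]$ whenever $u^p$ is symmetric and unimodal, which I would deduce from the monotonicity and comparison properties of the explicit one-dimensional fractional Green function (this is where the sharp Green-function estimates enter). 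Compactness of $B$ on $P$ then comes from the smoothing of $G_{(-1,1)}$, whose image lies in a space of uniformly H\"older functions, combined with the Arzel\`a--Ascoli theorem; the exponent $p>1$ causes no difficulty since the cone elements are uniformly bounded on bounded sets.

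The core of the argument, and the step I expect to be the main obstacle, is the two-sided coercivity bound (\ref{coe}), i.e. $a|u|^p \le |Bu| \le b|u|^p$ with $0 < a < b$, where the upper constant $b$ is exactly the one appearing in (\ref{cbf}). For a symmetric unimodal $u$ one has $|u| = u(0)$, so the upper bound is immediate with $b = \int_{-1}^1 G_{(-1,1)}(0,y)\,dy$, using $u^p(y) \le u(0)^p$. The lower bound is the subtle one: a naive supremum-norm estimate fails for narrow spikes, so I would refine $P$ to a Krasnoselskii-type cone by adjoining a condition of the form $\min_{|x| \le r} u(x) \ge \gamma |u|$ for suitable fixed $r \in (0,1)$ and $\gamma \in (0,1)$ determined by lower bounds on $G_{(-1,1)}$. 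On such a cone $|Bu| = (Bu)(0) \ge \gamma^p u(0)^p \int_{-r}^{r} G_{(-1,1)}(0,y)\,dy =: a\,|u|^p$, and the same Green-function comparison that fixes $\gamma$ also shows that $B$ preserves the extra condition, keeping the refined $P$ invariant. After verifying $u_0 = G_{(-1,1)} h \in P$ (its membership, including the $\min$-condition, again following from positivity and the Green-function estimates applied to the symmetric unimodal $h$), all hypotheses of Theorem \ref{main} hold, and its conclusion furnishes at least two solutions of (\ref{ebf}) in $P$, hence two nonnegative weak solutions of (\ref{frac1})--(\ref{frac2}).

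Finally, for the classical regularity claim under $h \in C^\gamma(-1,1)$ with $\gamma > 2 - \alpha$, I would bootstrap: any weak solution $u$ is bounded and nonnegative, so that $u^p + h \in C^\gamma$ after an initial continuity step, and interior Schauder-type estimates for the fractional Laplacian promote $u = G_{(-1,1)}(u^p+h)$ to regularity of order $\alpha + \gamma$ in the interior. The threshold $\gamma > 2 - \alpha$ is precisely what forces $\alpha + \gamma > 2$, so that $u$ becomes twice continuously differentiable in $(-1,1)$, $(-\Delta)^{\alpha/2}u$ is defined pointwise, and (\ref{frac1}) holds in the classical sense.
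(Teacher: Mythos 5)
Your overall architecture coincides with the paper's: the same fixed-point reformulation $u = G_V u^p + G_V h$ with $V=(-1,1)$, the same Krasnoselskii-type cone of nonnegative, symmetric, unimodal functions carrying a Harnack-type condition $\inf_{[-r,r]} u \ge \gamma\, \sup u$, the same coercivity estimates exploiting $|u| = u(0)$ on such a cone, compactness via the H\"older smoothing of $G_V$ plus Arzel\`a--Ascoli, and the same bootstrap for classical regularity when $h \in C^{\gamma}$, $\gamma > 2-\alpha$. You are also right that unimodality is what makes the cone mechanism work: the kernel comparison can only be made against $G_V(0,y)$ (the paper's Lemma \ref{Green}), not against $\sup_x G_V(x,y)$ --- by the sharp two-sided estimates, $\inf_{x\in U} G_V(x,y) \asymp \delta^{\alpha/2}(y)$ while $\sup_x G_V(x,y) \asymp \delta^{(\alpha-1)/2}(y)$ near $\partial V$, so their ratio degenerates like $\delta^{1/2}(y)$ --- and symmetry plus unimodality is precisely what guarantees $\sup B(u) = B(u)(0)$.

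The genuine gap is the step you yourself flag as delicate: that $G_V$ preserves unimodality. You propose to deduce it ``from the monotonicity and comparison properties of the explicit one-dimensional fractional Green function,'' but no such direct deduction is available, and this step is the technical heart of the paper (Lemma \ref{symuni}). For $0 < x < y < 1$ the natural reflection is about $z = (x+y)/2$, and $V$ has no symmetry about $z$, so the explicit formula yields no identity relating $G_V(x,\cdot)$ and $G_V(y,\cdot)$; moreover, in $G(x,y) = c\,|x-y|^{\alpha-1}\int_0^{w(x,y)} r^{\alpha/2-1}(r+1)^{-1/2}\,dr$ the prefactor \emph{increases} with $|x-y|$ (since $\alpha>1$) while $w(x,y)$ decreases, so even the relevant pointwise monotonicity of the kernel is not elementary. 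The paper circumvents this nonlocally: it passes to the maximal subinterval $W = (z-r,1)$, $r = 1-z$, which \emph{is} symmetric about $z$, writes $G_V f(w) = G_W f(w) + \int_{V\setminus W} G_V f(v)\, P_W(w,v)\,dv$, compares the Poisson kernels explicitly ($P_W(y,v) \le P_W(x,v)$ for $v \in V\setminus W$), and handles the $G_W$ term by the reflection identities for $G_W$ together with the nontrivial inequality $G_W(y,v) \ge G_W(\hat{y},v)$ for $v \in W_+$, which is imported from Corollary 3.2 of \cite{Kul} rather than read off the explicit formula. Without this decomposition-plus-reflection idea (or an equivalent external input), the invariance of your cone under $B$ --- and hence the entire application of Theorem \ref{main} --- is unsupported.
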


The proofs of the above theorems will be postponed to the next sections.

The motivation for the fractional Laplacian originates from multiple sources, among others from: Probability and Mathematical Finance as the infinitesimal generators of stable L\'evy processes (\cite{Ber, BGR, ref:article}), which play nowadays an important role in stochastic modeling in applied sciences and in financial mathematics, Mechanics  encountered in elastostatics as  Signorini obstacle problem in linear elasticity (\cite{CS}) and finally from Fluid Mechanics as quasi-geostrophic fractional Navier-Stokes equation, see \cite{CV, Vaz} and references therein and Phase Transitions as described in \cite{SV}. Let us also mention here that the result corresponding to Theorem \ref{main} for the equations involving bilinear form, corresponding to $p=2$, were proved by one of the authors of this paper in \cite{Stj} motivated by the Navier--Stokes equation (cf. \cite{CM}), the Boltzmann equation (cf. \cite{dPL}), the quadratic reaction diffusion equation (cf. \cite{HW}), the Smoluchowski coagulation equation (cf. \cite{Smo}) or the system modeling chemotaxis \cite{Stc} to name but a few. The problem of uniqueness of solutions for these equations attracted a lot of attention and only some partial results are known. In some cases nonuniqueness occurs and the existence of two solutions can be proved. Sometimes one of the solution is a trivial one and then the proof relies on finding a nontrivial one, which can be of lower regularity or a nonstable one. In these models one encounters another problem making our approach not feasible i.e. very common lack of compactness, thus if we would like to make our approach feasible we are forced to consider some truncated baby model
compatible with compact setting.

To prove the existence of two solutions we shall use the Krasnoselskii
Fixed Point Theorem, cf. \cite{Guo}, which allows us to obtain 
more solutions if the nonlinear operator has the required property
of ``crossing" identity twice, i.e. by the cone compression and the 
expansion on some appropriate subsets of the cone.

It should be noted that the problem of existence of multiple solutions of nonlinear equations was addressed by H. Amann in \cite{Ama} in ordered Banach spaces rather than analysed from topological point of view as in our approach.

\section{Preliminaries concerning fractional Laplacian}
Let $\alpha \in (0,2)$ and $u:{\mathbb R}^d \to {\mathbb R}$ be a measurable function satisfying
\begin{equation}
\label{intcondition}
\int_{{\mathbb R}^d} \frac{|u(x)|}{(1+|x|)^{d+\alpha}} \, dx < \infty.
\end{equation}
For such a function the fractional Laplacian can be defined as follows (cf. \cite{BB9}, page 61)
$$\left(-\Delta\right)^{\alpha/2}u(x)=c_{d,-\alpha}\lim_{\varepsilon\to 0^+} \int_{\{y\in \mathbb R^d:|x-y|>\varepsilon\}}\frac{u(x)-u(y)}{|x-y|^{d+\alpha}}dy,$$
whenever the limit exists. Here we have 
$$c_{d,\gamma}=\Gamma((d-\gamma)/2)/(2^\gamma \pi^{d/2}|\Gamma(\gamma/2)|)\,.$$

It is known that if $u$ satisfies (\ref{intcondition}) and $u \in C^2(D)$ for some open set $D \subset {\mathbb R}^d$ then $\left(-\Delta\right)^{\alpha/2}u(x)$ is well defined for any $x \in D$, which can be justified by Taylor expansion of the function $u$. The fractional Laplacian may also be defined in a weak sense, see e.g. page 63 in \cite{BB9}.

Let us consider the Dirichlet linear problem for the fractional Laplacian on a bounded open set $D \subset {\mathbb R}^d$
\begin{eqnarray}
\label {Dirichlet1}
(-\Delta)^{\alpha/2} u(x)&=& g(x), \quad x \in D,\\
\label{Dirichlet2}
u(x)&=&0, \quad \quad x \notin D.
\end{eqnarray}
It is well known that there exist the Green operator $G_D$ and the Green function $G_D(x,y)$ corresponding to the problem (\ref{Dirichlet1})-(\ref{Dirichlet2}). Namely, if $g \in L^{\infty}$ then the unique (weak) solution of this problem is given by
\begin{equation}
\label{Greenop}
u(x) = G_D g(y) = \int_D G_{D}(x,y) g(y) \, dy.
\end{equation}
It should be noted that this $u$ is in fact in $C^{\gamma}$ with $\gamma>0$, cf. \cite{ROS}, whence also follows that $G_D$ increases interior regularity by $\alpha$ on the level of the H\"older continuous functions. The definition and basic properties of the Green operator and the Green function may be found e.g. in \cite{BB9} or \cite{BB}. It is well known that for any $\alpha\in (0,2)$ the Green function for the ball $B(0,1)$ is given by an explicit formula \cite{BGR} 
$$G_{B(0,1)}(x,y)=c^d_{\alpha}|x-y|^{\alpha-d}\int_0^{w(x,y)}r^{\alpha/2-1}(r+1)^{-d/2}\,dr, \quad x,y \in B(0,1),$$
where
$$w(x,y)=(1-|x|^2)(1-|y|^2)|x-y|^{-2}$$
and
$$c^d_{\alpha}=\Gamma(d/2)/(2^{\alpha}\pi^{d/2}\Gamma^2(\alpha/2))\,.$$
We have $G_{B(0,1)}(x,y) = 0$ if $x \notin B(0,1)$ or $y \notin B(0,1)$. 

In \cite{ROS} some Krylov type estimates on the regularity of solutions to the equations involving fractional Laplacian were provided by X. Ros-Oton and J. Serra. The regularity and the existence and uniquness issues for the problems involving fractional Laplacian were also addressed by X. Cabr\'e and Y. Sire  \cite{CS1, CS2}. For any open bounded $C^{1,1}$ domain $D$, $g\in L^{\infty}$ and a distance function $\delta(x)=\text{dist}(x,\partial D)$ if $u$ is the solution of the Dirichlet problem (\ref{Dirichlet1})-(\ref{Dirichlet2}) then $u/\delta^{\alpha/2}|_{D}$ can be continuously extended to $\overline D$. Moreover, we have $u/\delta^{\alpha/2} \in C^{\gamma}(\overline{D})$ and we control the norm
$$||u/\delta^{\alpha/2}||_{C^{\gamma}(\overline D)} \le C |g|_{\infty}$$
for some $\gamma<\min\{\alpha/2,1-\alpha/2\}\,.$ It suffices, due to the compact embedding $C^{\gamma}(\overline{D})\subset C(\overline{D})$, for compactness of the operator
$$G_D:C(\overline{D})\to C(\overline{D}).$$

We say that the bounded measurable function $u:{\mathbb R}^d \to {\mathbb R}$ is $\alpha$-harmonic in an open set $D \subset {\mathbb R}^d$ if $(-\Delta)^{\alpha/2}u(x) = 0$, for any $x \in D$ (in the classical sense). It is known (see e.g. \cite{BB9}, \cite{BB}) that such a function $u$ satisfies
$$
u(x) = \int_{D^c} P_D(x,y) u(y) \, dy, \quad x \in D,
$$
where $P_D: D \times D^c \to {\mathbb R}$ is the Poisson kernel (corresponding to the fractional Laplacian). The Poisson kernel for a ball $B(0,r) \subset {\mathbb R}^d$, $r > 0$ is given by an explicit formula (\cite{BGR})
$$
P_{B(0,r)}(x,y) = C_{\alpha}^d \frac{(r^2 - |x|^2)^{\alpha/2}}{(|y|^2 - r^2)^{\alpha/2} |x-y|^d}, \quad |x| < r,  |y| > r,
$$
where $C_{\alpha}^d = \Gamma(d/2) \pi^{-d/2-1} \sin(\pi \alpha/2)$.

\section{The abstract multiplicity result for compact p-power operators}

To prove Theorem \ref{main} we shall 
follow the lines of the proof presented in \cite{Stj} for $p=2$ and use the following theorem \cite[Theorem 2.3.4]{Guo} originating from the works of Krasnoselskii, cf. \cite{Kra}.
\begin{theorem}\label{con}
Let $E$ be a Banach space, and let $P\subset E$ be a cone in $E$. Let $\Omega_1$ and
$\Omega_2$ be two bounded, open sets in $E$ such that $0\in \Omega_1$ and $\overline{\Omega}_{1}\subset\Omega_2$. Let completely continuous operator $T:P\rightarrow P$ satisfy conditions
$$
|Tu|\leq |u| {\rm \;\; for \;\; any \;\;} u\in P\cap\partial\Omega_1 {\rm \;\; and \;\;}
|Tu|\geq |u| {\rm \;\; for \;\; any \;\;} u\in P\cap\partial\Omega_2 {\rm \;\;}
$$
or, alternatively, the following two conditions
$$
|Tu|\geq |u| {\rm \;\; for \;\; any \;\;} u\in P\cap\partial\Omega_1 {\rm \;\; and \;\;}
|Tu|\leq |u| {\rm \;\; for \;\; any \;\;} u\in P\cap\partial\Omega_2 {\rm \;\;}
$$
are satisfied. Then $T$ has at least one fixed point in $P\cap(\overline{\Omega}_2\setminus
\Omega_1)$.
\end{theorem}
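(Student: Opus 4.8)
The plan is to derive Theorem \ref{con} from the theory of the fixed point index $i(T,P\cap\Omega,P)$ for completely continuous self-maps of the cone $P$ relative to a relatively open bounded subset $P\cap\Omega$, using its four standard properties: normalization, additivity (excision), homotopy invariance, and the solution property that a nonzero index forces the existence of a fixed point. If $T$ happens to have a fixed point on $P\cap\partial\Omega_1$ or on $P\cap\partial\Omega_2$ there is nothing left to prove, so I may assume throughout that $T$ is fixed-point free on both boundaries; this is exactly what makes the two indices $i(T,P\cap\Omega_1,P)$ and $i(T,P\cap\Omega_2,P)$ well defined. The whole argument then reduces to computing these two indices and comparing them by excision.

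First I would treat the compression inequality $|Tu|\le|u|$ on $P\cap\partial\Omega_1$, whose role is to give $i(T,P\cap\Omega_1,P)=1$. Consider the homotopy $H(t,u)=tTu$, $t\in[0,1]$, which maps $P\cap\overline{\Omega}_1$ into $P$. If $u=tTu$ for some $u\in P\cap\partial\Omega_1$, then $|u|=t|Tu|\le t|u|$; since $0\in\Omega_1$ forces $|u|>0$ on $\partial\Omega_1$, this gives $t\ge 1$, hence $t=1$, which is the excluded boundary fixed point. Thus $H$ is admissible, and homotopy invariance together with normalization yields $i(T,P\cap\Omega_1,P)=i(0,P\cap\Omega_1,P)=1$, the last equality because the constant map to the interior point $0$ has index $1$.

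Next I would treat the expansion inequality $|Tu|\ge|u|$ on $P\cap\partial\Omega_2$, whose role is to give $i(T,P\cap\Omega_2,P)=0$; this is the step I expect to be the genuine obstacle. The inequality first yields the non-eigenvalue statement $Tu\neq\lambda u$ for every $u\in P\cap\partial\Omega_2$ and every $\lambda\in(0,1]$ (for $\lambda<1$ one would get $|Tu|=\lambda|u|<|u|$, and $\lambda=1$ is the excluded fixed point), and, since $0\in\Omega_2$ forces $\inf_{u\in P\cap\partial\Omega_2}|u|>0$, it also yields $\inf_{u\in P\cap\partial\Omega_2}|Tu|\ge\inf_{u\in P\cap\partial\Omega_2}|u|>0$. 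By the cone-expansion companion to the previous computation, which underlies \cite[Theorem 2.3.4]{Guo}, these two facts force $i(T,P\cap\Omega_2,P)=0$: concretely, using that $T(P\cap\overline{\Omega}_2)$ is bounded by complete continuity and that $\Omega_2$ is bounded, one deforms $T$ through $H(t,u)=Tu+tRe$ to the fixed-point-free map $u\mapsto Tu+Re$ obtained by translating along a large positive multiple of a fixed vector $e\in P\setminus\{0\}$, the strict lower norm bound being exactly what is needed to keep this homotopy admissible on $P\cap\partial\Omega_2$. The delicate point, and where I would spend the most care, is verifying that admissibility in full generality, i.e. that no point of the boundary is fixed by any $H(t,\cdot)$.

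Finally, since $\overline{\Omega}_1\subset\Omega_2$ and $T$ has no fixed point on $P\cap\partial\Omega_1$ nor on $P\cap\partial\Omega_2$, the additivity (excision) property applies to the disjoint open pieces $P\cap\Omega_1$ and $P\cap(\Omega_2\setminus\overline{\Omega}_1)$ of $P\cap\Omega_2$ and gives
$$i(T,P\cap(\Omega_2\setminus\overline{\Omega}_1),P)=i(T,P\cap\Omega_2,P)-i(T,P\cap\Omega_1,P)=0-1=-1\neq0.$$
By the solution property, $T$ then has a fixed point in $P\cap(\Omega_2\setminus\overline{\Omega}_1)\subset P\cap(\overline{\Omega}_2\setminus\Omega_1)$, which is the desired conclusion. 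For the alternative pair of hypotheses the two index computations are simply interchanged (the compression argument now applies on $\partial\Omega_2$ and the expansion argument on $\partial\Omega_1$), giving $i(T,P\cap\Omega_1,P)=0$ and $i(T,P\cap\Omega_2,P)=1$, hence a difference $+1\neq0$, and the existence of a fixed point follows in exactly the same way.
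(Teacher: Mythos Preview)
The paper does not give a proof of this theorem: it is stated as a quotation of \cite[Theorem~2.3.4]{Guo} (Krasnoselskii's cone compression/expansion theorem) and then used as a black box in the proof of Theorem~\ref{main}. Your outline via the fixed point index $i(T,P\cap\Omega,P)$ is precisely the standard route taken in the cited reference, so rather than diverging from the paper you are supplying the argument the paper deliberately omits.

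One caution on the expansion step. The translation homotopy $H(t,u)=Tu+tRe$ is the right object and your observation that the endpoint map $u\mapsto Tu+Re$ is fixed-point free on $P\cap\overline{\Omega}_2$ for $R$ large (by boundedness of $\Omega_2$ and of $T(P\cap\overline{\Omega}_2)$) is correct. However, admissibility on $P\cap\partial\Omega_2$ for intermediate $t$ does not follow from the lower bound $\inf|Tu|>0$ in the way your sentence suggests: from $u=Tu+tRe$ with $Tu,\,tRe\in P$ one cannot, in a general cone with a general norm, extract an inequality contradicting $|Tu|\ge|u|$. In the textbook development this is handled through a separate lemma (to the effect that the non-eigenvalue condition $Tu\neq\mu u$ for $0<\mu\le1$ on $P\cap\partial\Omega_2$, together with $\inf|Tu|>0$, forces index zero), and the proof of that lemma uses a somewhat different device than a bare norm comparison. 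You have rightly flagged this as the delicate point; as written, that one sentence is the only real gap in an otherwise accurate plan.
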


\begin{proof}[proof of Theorem \ref{main}] 
Let us define the operator 
\begin{equation}\label{Tde}
Tu=B(u)+u_0
\end{equation}
then we shall apply Krasnosielskii Theorem once as a cone-compression in the neighborhood of zero and secondly as a cone-expansion at infinity.

Notice that we have the following estimates
\begin{eqnarray}
\begin{array}{ll}
|Tu|\le |u_0|+b|u|^p,\\
|Tu|\ge |u_0|-b|u|^p,
\end{array}
\end{eqnarray}
where constant $b=|B|>0$ denotes the the smallest constant $b$ satisfying, for any $u\in P$, the inequality
$$
|B(u)|\le b|u|^p\,.
$$ 

Then we can assume by (\ref{cbf}) that there exists some intermediate value $\rho_2>0$ such that
\begin{equation}\label{r2e}
|u_0|+b\rho_2^p<\rho_2\,.
\end{equation}
Indeed as announced in the introduction for the real line superlinear problem the above equation is equivalent to 
\begin{equation}\label{ur1}
|u_0|\rho_2^{-1}+b\rho_2^{p-1}<1\,.
\end{equation}
while the minimum of the function $|u_0|\rho_2^{-1}+b\rho_2^{p-1}$ is attained at $\rho_2$ such that $\rho_2^p=|u_0|/(b(p-1))$ and the minimum value is equal to
\begin{equation}\label{val}
|u_0|^{1-1/p}\left((b(p-1))^{1/p}+b^{1/p}(p-1)^{(1-p)/p}\right)\,.
\end{equation}
Requiring the value (\ref{val}) to be smaller than one as in (\ref{ur1}) is equivalent to (\ref{cbf}) which thus implies the claim (\ref{r2e}).

Hence by (\ref{coe}) together with (\ref{r2e}) for any $u\in P$ and $|u|=\rho_2$ one has
\begin{equation}\label{2co}
|Tu|\le |u_0|+b|u|^p<\rho_2=|u|.
\end{equation}

Moreover, if $u_0=0$ then $u=0$ is a solution. Otherwise, if $u_0\neq 0$ then for sufficiently small $\rho_1$ such that $\rho_2>\rho_1>0$ and $b\rho_1^p+\rho_1<|u_0|$ for any $u\in P$ and 
$|u|=\rho_1$ one has
\begin{equation}\label{1co}
|Tu|\ge |u_0|-b\rho_1^p > \rho_1=|u|.
\end{equation}
Thus both conditions (\ref{2co}) and (\ref{1co}) can be accomplished if we assume $b\rho_1^p+\rho_1< |u_0| < \rho_2-b\rho_2^p.$

Finally, for sufficiently large values of $\rho_3>0$ and any 
$u\in P$ and $|u|=\rho_3$, due to the coercivity assumption (\ref{coe})
\begin{equation}
|B(u)|\ge a|u|^p\,,
\end{equation}
one obtains
\begin{equation}\label{3co}
|Tu|\ge a\rho_3^p -|u_0|> \rho_3=|u|.
\end{equation}
To be more specific $\rho_3$ has to be so large that $\rho_3>\rho_2$ and $|u_0|<a\rho_3^p-\rho_3.$ 

Combining (\ref{2co}) with (\ref{1co}) we get that the intersection of the cone P with the spheres of the  
radii $\rho_1$ and $\rho_2$ (in the $|\cdot|$ norm) is compressed 
while the one at the radii $\rho_2$ and $\rho_3$ is expanded yielding the 
desired two fixed points in each set. Note that it might be necessary to 
distinguish between $\rho_2$ used in both sets as to prevent both fixed 
points to coincide. 

\end{proof}

\begin{remark}
To guarantee (\ref{3co}) in fact it suffices to assume only that
$$ \frac{|B(u)|}{|u|}\rightarrow\infty {\rm \;\; as \;\;} |u|\rightarrow \infty$$
instead of the lower estimate for the $B(u)$ as in (\ref{coe}). 
\end{remark}
 
\section{Multiplicity result for superlinear integral operator involving p-power nonlinearity}

Consider, for some open nonempty domain $V \subset \mathbb{R}^d$, the following equation in the space $BC(\overline{V})$ of bounded and continuous functions defined as 
\begin{equation}\label{hie}
{G F} u + u_0 = u
\end{equation}
where $u_0\in BC(\overline{V}$) is given, $u\in BC(\overline{V})$ is the unknown and $G$ is some linear integral operator defined by 
\begin{equation}
{ G} f(x) = \int_V G(x,y) f(y)\, dy
\end{equation}
for some given kernel function $G: \overline{V} \times \overline{V} \to \mathbb{R}$ smooth enough to guarantee compactness of ${G}$ in $BC(\overline{V})$, while a nonlinear operator $F$ is defined for $p>1$ by
\begin{equation}\label{Fde}
F u (y) =(u(y))^p\,.
\end{equation}
Then the operator $B$ from Theorem \ref{main} can be defined as 
\begin{equation}
B={ G F}\,.
\end{equation}
Let us define for some given, nonempty and open set $U\subset\subset V$ (i.e. $U$ is such that $\overline{U}\subset V$) and some constant $\gamma_V>0$ the cone $P$ as
\begin{equation}
P=\{u \in BC(\overline{V}): u \ge 0, \inf_U  u \ge \gamma_V \sup_{\overline{V}} u\}\,.
\end{equation}

Assume that the kernel $G$ is positive on $V \times V$ and that for any $y\in \overline{V}$ the following property holds
\begin{equation}\label{Ges}
\inf_{x\in U} G(x,y) \ge \gamma_V \sup_{x\in \overline{V}}G(x,y)
\end{equation}
where $\gamma_V>0$ is independent of $y$. 

Then the cone $P$ is invariant under $G F$. Using standard arguments (see \cite{Stj}) when we apply Theorem \ref{main} for $B=G F$
we arrive at the following theorem.
\begin{theorem}
There exist at least two nonnegative solutions to the Hammerstein equation (\ref{hie}) provided the function $G$ is regular enough to guarantee the compactness of the corresponding operator and satisfies (\ref{Ges}), while ${F}$ is defined by (\ref{Fde}) for some $p>1$ and $u_0$ is small enough as to satisfy (\ref{cbf}).
\end{theorem}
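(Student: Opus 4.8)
The plan is to verify that the operator $B = GF$ satisfies every hypothesis of Theorem~\ref{main} on the cone $P$, after which the conclusion is immediate once $u_0$ is taken small enough to obey (\ref{cbf}). Three things must be checked: that $B$ maps $P$ into $P$, that $B$ is compact, and that $B$ obeys the two-sided power estimate (\ref{coe}), with $|\cdot|$ the supremum norm on $BC(\overline{V})$ (so that $|u| = \sup_{\overline{V}} u$ for $u \in P$).

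For cone invariance, fix $u \in P$ and put $f = Fu = u^p \ge 0$. Positivity of the kernel gives $GFu \ge 0$, and the defining inequality of $P$ is recovered by pulling the infimum inside the integral and invoking (\ref{Ges}),
\begin{equation*}
\inf_{x \in U} \int_V G(x,y) f(y)\, dy \ge \int_V \Big(\inf_{x\in U} G(x,y)\Big) f(y)\, dy \ge \gamma_V \int_V \Big(\sup_{x\in\overline{V}} G(x,y)\Big) f(y)\, dy \ge \gamma_V \sup_{\overline{V}} GFu,
\end{equation*}
where $f \ge 0$ is exactly what legitimizes each interchange. Hence $GFu \in P$. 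Compactness of $B$ is then routine: $F$ carries norm-bounded subsets of the cone to norm-bounded sets since $|Fu| = |u|^p$ there, and $G$ is compact by the regularity assumption on its kernel, so the composition $GF$ is compact.

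The heart of the matter is the two-sided bound (\ref{coe}). The upper estimate is direct,
\begin{equation*}
|GFu| = \sup_{x\in\overline{V}} \int_V G(x,y) u(y)^p\, dy \le \Big(\sup_{x\in\overline{V}}\int_V G(x,y)\, dy\Big)|u|^p,
\end{equation*}
so $b = \sup_{x\in\overline{V}}\int_V G(x,y)\, dy$ serves as the upper constant, finite by the regularity of $G$. For the lower bound I would exploit the cone membership: for $u \in P$ one has $u(y) \ge \gamma_V |u|$ on $U$, whence $u(y)^p \ge \gamma_V^p |u|^p$ there, and restricting the integral to $U$ yields
\begin{equation*}
|GFu| \ge \sup_{x\in U}\int_U G(x,y) u(y)^p\, dy \ge \gamma_V^p \Big(\sup_{x\in U}\int_U G(x,y)\, dy\Big)|u|^p,
\end{equation*}
so $a = \gamma_V^p \sup_{x\in U}\int_U G(x,y)\, dy$. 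Since $G > 0$ on $V\times V$ and $U$ is nonempty open with $\overline{U} \subset V$, the bracketed integral is strictly positive, giving $a > 0$, and $a < b$ (shrinking $a$ slightly if equality should ever occur). With (\ref{coe}) established and (\ref{cbf}) assumed, Theorem~\ref{main} delivers the two solutions in $P$.

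The main obstacle I anticipate is precisely the lower estimate. Without the cone restriction $GFu$ could be made arbitrarily small relative to $|u|^p$ by concentrating the mass of $u$ where the kernel is small, so no uniform constant $a > 0$ would exist. The Harnack-type condition (\ref{Ges}), encoded into the definition of $P$, is exactly the device that rules this out by forcing $u$ to stay comparable to its supremum on the fixed subdomain $U$; turning this into the quantitative bound above is the only genuinely nontrivial point, the remaining verifications being formal.
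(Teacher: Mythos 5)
Your proposal is correct and follows exactly the route the paper intends: the paper's own ``proof'' is just the remark that $P$ is invariant under $GF$ plus a citation to standard arguments from \cite{Stj}, and your verification (cone invariance via pulling the infimum through the integral and using (\ref{Ges}), compactness of the composition, and the two-sided sup-norm bound with $a=\gamma_V^p\sup_{x\in U}\int_U G(x,y)\,dy$ and $b=\sup_{x\in\overline{V}}\int_V G(x,y)\,dy$) is precisely the argument the authors spell out later for the fractional Laplacian case. The only detail worth adding is the implicit requirement $u_0\in P$, which Theorem \ref{main} needs and which the paper also leaves tacit in this section.
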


Note that to guarantee compactness of $G F$ usually the domain $U$ is assumed to be bounded and the kernel $G$ smooth enough but also for unbounded $U$
some results on compactness of $G$ under stronger decay assumptions on $F$ than the pure power like form were established, e.g. in \cite{Sth}.

\section{Multiplicity result for fractional Laplacian}

In this section we prove Theorem \ref{multifractional}. First we need two auxiliary lemmas. Let us denote $V = (-1,1)$. Recall that $G_V$ is the Green function for the one-dimensional problem (\ref{frac1})-(\ref{frac2}), also denoting the corresponding Green operator.

\begin{lemma}
\label{Green}
Let $a \in (0,1)$, $U = (-a,a)$. There exists $\gamma_U>0$ such that for any $y \in V$ we have
$$
\inf_{x \in U} G_V(x,y) \ge \gamma_U G_V(0,y).
$$
\end{lemma}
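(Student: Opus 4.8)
The plan is to reduce the desired inequality to a compactness statement about the ratio
$$R(x,y) = \frac{G_V(x,y)}{G_V(0,y)}, \qquad (x,y) \in [-a,a] \times V,$$
and then to show that $R$ extends to a strictly positive continuous function on the \emph{compact} set $[-a,a] \times [-1,1]$. Once this is done, $R$ attains a positive minimum $\gamma_U := \min R > 0$ there, and for every $y \in V$ and $x \in U$ one gets $G_V(x,y) = R(x,y)\,G_V(0,y) \ge \gamma_U\, G_V(0,y)$, which is exactly the claim after taking the infimum over $x \in U$. Everything hinges on the explicit one-dimensional Green function formula of the Preliminaries, specialized to $d=1$, $\alpha \in (1,2)$.

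First I would record that $G_V(x,y) > 0$ for all $x,y \in V$ and that $G_V$ is jointly continuous on $V \times V$, including across the diagonal. Indeed, since $\alpha - 1 > 0$, as $y \to x$ the weight $w(x,y) \to \infty$ and $\int_0^{w} r^{\alpha/2-1}(r+1)^{-1/2}\,dr = \tfrac{2}{\alpha-1}\,w^{(\alpha-1)/2} + O(1)$, so the prefactor $|x-y|^{\alpha-1}$ cancels the growth of the integral and yields the finite positive diagonal value $\tfrac{2c^1_{\alpha}}{\alpha-1}(1-x^2)^{\alpha-1}$. Hence $R$ is finite, continuous and strictly positive on $[-a,a] \times V$, and the only points requiring separate treatment are the boundary values $y = \pm 1$, where numerator and denominator both vanish and $R$ is a priori a $0/0$ indeterminacy.

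The heart of the argument is therefore the boundary asymptotics. For $y \to 1^-$ (and symmetrically $y \to -1^+$), with $x$ ranging over $[-a,a] \subset\subset V$, we have $w(x,y) = (1-x^2)(1-y^2)|x-y|^{-2} \to 0$, so using $\int_0^{w} r^{\alpha/2-1}(r+1)^{-1/2}\,dr \sim \tfrac{2}{\alpha} w^{\alpha/2}$ for small $w$ I would derive the expansion, uniform in $x \in [-a,a]$,
$$G_V(x,y) \sim c^1_{\alpha}\,\tfrac{2}{\alpha}\,2^{\alpha/2}\,(1-x)^{\alpha/2-1}(1+x)^{\alpha/2}\,(1-y)^{\alpha/2}, \qquad y \to 1^-.$$
Dividing by the same expression evaluated at $x=0$ gives $R(x,y) \to (1-x)^{\alpha/2-1}(1+x)^{\alpha/2}$ as $y \to 1^-$, uniformly for $x \in [-a,a]$; because $a < 1$ keeps $1 \pm x$ bounded away from $0$, this limit is a strictly positive continuous function of $x$. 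The symmetric computation at $y = -1$ then completes the continuous, strictly positive extension of $R$ to $[-a,a] \times [-1,1]$, and the compactness argument of the first paragraph finishes the proof.

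I expect the boundary analysis to be the main obstacle: one must extract the precise vanishing rate $(1-y)^{\alpha/2}$ of $G_V(x,y)$ as $y \to \partial V$ and verify it is the \emph{same} rate for numerator and denominator, so the indeterminacy resolves into a positive finite limit rather than degenerating to $0$ or $\infty$; the uniformity in $x$, guaranteed by keeping $[-a,a]$ away from $\pm 1$, is what upgrades the pointwise limits to joint continuity on the compact set. As an alternative to the explicit expansion, one could invoke the known sharp two-sided estimates for $G_V$ (equivalently the boundary Harnack principle for the fractional Laplacian), which bound $G_V(x,y)$ above and below by a constant multiple of $|x-y|^{\alpha-1}\min\{1,\ \delta(x)^{\alpha/2}\delta(y)^{\alpha/2}|x-y|^{-\alpha}\}$ with $\delta(\cdot)=\operatorname{dist}(\cdot,\partial V)$; restricting $x$ to $[-a,a]$ bounds $\delta(x)$ and $|x-y|$ from below and makes the comparison of $G_V(x,y)$ with $G_V(0,y)$ immediate and uniform in $y$.
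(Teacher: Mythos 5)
Your main argument is correct, but it takes a genuinely different route from the paper. The paper's proof is short and estimate-driven: it quotes the sharp two-sided bounds of \cite[Corollary 3.2]{BB}, namely $G_V(x,y)\asymp \frac{\delta^{\alpha/2}(x)\delta^{\alpha/2}(y)}{|x-y|}\wedge \delta^{\frac{\alpha-1}{2}}(x)\delta^{\frac{\alpha-1}{2}}(y)$, deduces $G_V(x,y)\ge c_U\,\delta^{\alpha/2}(y)$ for all $x\in U$, $y\in V$ (using $\delta(x)\ge 1-a$, $|x-y|\le 2$ and $\delta(y)\le 1$), and then checks $G_V(0,y)\le C\,\delta^{\alpha/2}(y)$ by splitting into $|y|<1/2$ and $|y|\ge 1/2$; the two displays combine immediately. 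You instead work from the explicit BGR formula and run a compactness argument on the ratio $R(x,y)=G_V(x,y)/G_V(0,y)$. Your computations check out: for $d=1<\alpha$ the Green function is indeed finite and continuous across the diagonal, with $G_V(x,x)=\frac{2c^1_\alpha}{\alpha-1}(1-x^2)^{\alpha-1}$, and your boundary expansion is correct, with $R(x,y)\to(1-x)^{\alpha/2-1}(1+x)^{\alpha/2}$ as $y\to 1^-$, uniformly on $[-a,a]$ because $w(x,y)\le(1-y^2)/(1-a)^2\to 0$ uniformly there; positivity of the minimum of the extended $R$ on the compact set $[-a,a]\times[-1,1]$ then yields $\gamma_U$. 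What your route buys is an explicit limiting boundary profile and, in principle, the optimal constant $\gamma_U=\min R$; what the paper's route buys is brevity and portability, since it uses no explicit formula and transfers verbatim to any domain where sharp two-sided Green function estimates are known.

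One caution about your closing ``alternative'' remark: as stated it is wrong in this setting. The comparability $G_V(x,y)\asymp |x-y|^{\alpha-1}\min\{1,\ \delta^{\alpha/2}(x)\delta^{\alpha/2}(y)|x-y|^{-\alpha}\}$ is the sharp form for $d>\alpha$; for $d=1$ and $\alpha\in(1,2)$ it fails near the diagonal, where $G_V(x,x)$ is a finite positive number while your right-hand side vanishes like $|x-y|^{\alpha-1}$ (the exponent $\alpha-1$ is positive here). Likewise, restricting $x$ to $[-a,a]$ does \emph{not} bound $|x-y|$ from below, since $y$ ranges over all of $V$ and may coincide with $x$. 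The correct one-dimensional statement replaces $|x-y|^{\alpha-1}$ by $\delta^{\frac{\alpha-1}{2}}(x)\delta^{\frac{\alpha-1}{2}}(y)$, i.e., it is exactly the \cite[Corollary 3.2]{BB} bound, under which your alternative sketch becomes precisely the paper's proof. Since this was offered only as a side remark, it does not affect the validity of your main argument.
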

\begin{proof}
For any $x, y \in V$ by \cite[Corollary 3.2]{BB} we have
\begin{eqnarray}
\label{lower}
&& c_{\alpha} \left(\frac{\delta^{\alpha/2}(x) \delta^{\alpha/2}(y)}{|x-y|}\wedge \delta^{\frac{\alpha-1}{2}}(x) \delta^{\frac{\alpha-1}{2}}(y)\right) \\
\label{upper}
&& \le G_V(x,y) 
\le C_{\alpha} \left(\frac{\delta^{\alpha/2}(x) \delta^{\alpha/2}(y)}{|x-y|}\wedge \delta^{\frac{\alpha-1}{2}}(x) \delta^{\frac{\alpha-1}{2}}(y)\right),
\end{eqnarray}
where $\delta(x) = {\rm dist}(x,\partial V)$, $a \wedge b = \min(a,b)$.

Let $x \in U$, $y \in V$ be arbitrary. By (\ref{lower}) we get
$$
G_V(x,y) \ge c_U (\delta^{\alpha/2}(y) \wedge \delta^{\frac{\alpha-1}{2}}(y))
= c_U \delta^{\alpha/2}(y).
$$

On the other hand by (\ref{upper}) for any $y \in V$ we have
\begin{eqnarray*}
G_V(0,y) &\le& C_{\alpha} \left(\frac{\delta^{\alpha/2}(0) \delta^{\alpha/2}(y)}{|y|}\wedge \delta^{\frac{\alpha-1}{2}}(0) \delta^{\frac{\alpha-1}{2}}(y)\right) \\
&=& C_{\alpha} (\delta^{\alpha/2}(y) |y|^{-1} \wedge \delta^{\frac{\alpha-1}{2}}(y)).
\end{eqnarray*}
Hence for $y \in (-1/2,1/2)$ we get
$$
G_V(0,y) \le C_{\alpha} \delta^{\frac{\alpha-1}{2}}(y) \le C_{\alpha} \le 2^{\alpha/2} C_{\alpha} \delta^{\alpha/2}(y).
$$
For $y \in (-1,1) \setminus (-1/2,1/2)$ we obtain
$$
G_V(0,y) \le C_{\alpha} \delta^{\alpha/2}(y) |y|^{-1} \le 2 C_{\alpha} \delta^{\alpha/2}(y).
$$
\end{proof}

\begin{lemma}\label{symuni}
Suppose that $f \in BC(\overline{V})$ is nonnegative, symmetric and unimodal on $\overline{V}$. Then $G_V f\in BC(\overline{V})$ is also symmetric and unimodal on $\overline{V}$.  
\end{lemma}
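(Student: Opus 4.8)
The plan is to prove that the Green operator $G_V$ preserves symmetry and unimodality by exploiting the symmetry of the domain $V=(-1,1)$ and suitable monotonicity properties of the Green function $G_V(x,y)$. Throughout, write $u = G_V f$, so that $u(x) = \int_V G_V(x,y) f(y)\,dy$. The symmetry part should be the easier of the two: since $V$ is symmetric about the origin, the Green function satisfies $G_V(x,y) = G_V(-x,-y)$, which follows from the invariance of the fractional Laplacian and of the domain under the reflection $x\mapsto -x$. Granting this, I would compute $u(-x) = \int_V G_V(-x,y) f(y)\,dy$ and perform the substitution $y\mapsto -y$ together with $G_V(-x,-y)=G_V(x,y)$ and the hypothesis $f(-y)=f(y)$ to conclude $u(-x)=u(x)$.

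The more delicate part is unimodality, i.e. that $u$ is nondecreasing on $[-1,0]$ and nonincreasing on $[0,1]$. By the already-established symmetry of $u$ it suffices to prove that $u$ is nonincreasing on $[0,1]$. First I would reduce the claim to a statement about the Green function alone. Using symmetry of $f$, I would symmetrize the kernel: for $0 \le x_1 \le x_2 \le 1$,
\begin{equation*}
u(x_1) - u(x_2) = \int_0^1 \bigl[\,\bigl(G_V(x_1,y)+G_V(x_1,-y)\bigr) - \bigl(G_V(x_2,y)+G_V(x_2,-y)\bigr)\,\bigr]\, f(y)\, dy,
\end{equation*}
where I have folded the integral over $(-1,0)$ onto $(0,1)$ via $y\mapsto -y$ and used $f(-y)=f(y)$. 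Since $f\ge 0$, it suffices to show that the symmetrized kernel
\begin{equation*}
K(x,y) := G_V(x,y) + G_V(x,-y)
\end{equation*}
is, for each fixed $y\in(0,1)$, a nonincreasing function of $x$ on $[0,1]$. This is the heart of the matter.

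To establish the monotonicity of $K(\cdot,y)$ on $[0,1]$, I would work from the explicit one-dimensional Green function formula recorded in the Preliminaries, specialized to $d=1$ and the ball $B(0,1)=(-1,1)$, namely $G_V(x,y) = c_\alpha^1 |x-y|^{\alpha-1}\int_0^{w(x,y)} r^{\alpha/2-1}(r+1)^{-1/2}\,dr$ with $w(x,y)=(1-x^2)(1-y^2)|x-y|^{-2}$. The idea is to differentiate $K(x,y)$ in $x$ and show the derivative is nonpositive for $x\in(0,1)$, $y\in(0,1)$; the combination $G_V(x,y)+G_V(x,-y)$ should produce cancellations that make the sign definite even though neither term is individually monotone near $y$. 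The main obstacle I anticipate is controlling the behavior of $G_V(x,y)$ near the diagonal singularity $x=y$, where $|x-y|^{\alpha-1}$ blows up (recall $\alpha\in(1,2)$ makes $\alpha-1\in(0,1)$), and verifying that the singular contribution from the $G_V(x,y)$ term is dominated by the smooth, decreasing contribution from the reflected term $G_V(x,-y)$ together with the decay of the integral factor. An alternative, cleaner route that avoids direct differentiation would be a purely probabilistic/PDE symmetrization argument: one shows $u$ solves $(-\Delta)^{\alpha/2}u = f$ with $f$ symmetric unimodal, and invokes a moving-planes or reflection comparison principle for the fractional Laplacian on the symmetric interval to deduce that the solution inherits monotonicity away from the center; I would fall back on this if the explicit kernel estimate proves too unwieldy.
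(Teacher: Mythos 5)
Your symmetry argument is fine and agrees with the paper. The unimodality half, however, reduces the lemma to a false statement. Your reduction to the claim that $K(x,y)=G_V(x,y)+G_V(x,-y)$ is nonincreasing in $x$ on $[0,1]$ for each \emph{fixed} $y$ uses only nonnegativity and symmetry of $f$ — the unimodality hypothesis never enters — so if the kernel claim were true, $G_Vf$ would be unimodal for every nonnegative symmetric $f$, which fails. Concretely, take $x$ bounded away from $\pm 1$ and $y\to 1^-$: then $w(x,\pm y)\to 0$, and the explicit formula gives $G_V(x,y)\sim \frac{2c^1_\alpha}{\alpha}\,\frac{(1-x^2)^{\alpha/2}(1-y^2)^{\alpha/2}}{|x-y|}$, hence
\begin{equation*}
K(x,y)\sim \frac{2c^1_\alpha}{\alpha}\,(1-y^2)^{\alpha/2}(1-x^2)^{\alpha/2}\,\frac{2y}{y^2-x^2},
\end{equation*}
whose $x$-profile tends, as $y\to 1$, to a constant times $(1-x^2)^{\alpha/2-1}$ — \emph{strictly increasing} on $(0,1/2]$ since $\alpha<2$. (For $\alpha=2$ this profile is constant; your intuition is imported from the classical case, where $u$ is concave and unimodality of $f$ is indeed superfluous.) Equivalently, $f$ a symmetric pair of bumps concentrated near $\pm y_0$ with $y_0$ close to $1$ is nonnegative and symmetric but yields $G_Vf$ increasing on part of $(0,1)$. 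So any correct proof must couple different values of $y$ through the unimodality of $f$; no fixed-$y$ monotonicity of a symmetrized kernel can exist. A minor factual slip as well: for $d=1<\alpha$ there is no diagonal blow-up — $|x-y|^{\alpha-1}\to 0$ as $x\to y$ and $G_V$ is bounded — so the singularity-cancellation obstacle you anticipate is not the real difficulty.

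Your fallback (a reflection comparison principle) is the right direction and is essentially what the paper does, but as sketched it omits the mechanism and, again, does not identify where unimodality of $f$ must be used. The paper fixes $0<x<y<1$, reflects about the midpoint $z=(x+y)/2$, sets $W=(2z-1,1)$, and uses the decomposition $G_Vf(w)=G_Wf(w)+\int_{V\setminus W}G_Vf(v)P_W(w,v)\,dv$. The Poisson term is handled by the explicit inequality $P_W(y,v)\le P_W(x,v)$ for $v\in V\setminus W$ (only nonnegativity of $G_Vf$ is needed there, since $|x-z|=|y-z|$), while in the $G_Wf$ term one pairs $v$ with $\hat v=2z-v$ to obtain the integrand $(G_W(y,v)-G_W(\hat y,v))(f(v)-f(\hat v))\le 0$ on $(z,1)$: the factor $f(v)-f(\hat v)\le 0$ is exactly where unimodality of $f$ enters, and the factor $G_W(y,v)\ge G_W(\hat y,v)$ is a genuinely nontrivial monotonicity of the Green function quoted from \cite[Corollary 3.2]{Kul}. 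If you wish to salvage your plan, you should expect to need an input of that strength; it is not something that falls out of elementary differentiation of the explicit Green function formula.
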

\begin{proof} Symmetry of $G_V f$ follows by an explicit formula for the Green function of an interval (see Preliminaries). Note also that $G_V f(-1) = G_V f(1) = 0$. It is well known (see e.g. \cite{ROS}) that $G_V f$ is continuous on $\overline{V}$. Now we show that $G_V f$ is nonincreasing on $(0,1)$. To this end take any $0<x<y<1$ and fix $z=\frac{x+y}{2}$ and set $r=1-z$. Define the interval  $W=(z-r,z+r)=(z-r,1)$. By \cite[p. 87]{BB9} and \cite[p. 318]{BB} , for any $w\in W$ we have
$$ G_V f(w)= G_W f(w) + \int_{V\setminus W} G_V f(v) P_W(w,v)\,dv$$
where $G_V$, $G_W$ are Green operators for $V$, $W$ (respectively), while $P_W$ is the Poisson kernel for $W$, all corresponding to the fractional Laplacian $(-\Delta)^{\alpha/2}$ (see Preliminaries).
Let $\hat{w} = 2z-w$ be the inversion of a point w in respect to a point z. Clearly, we have $\hat{x} =y$ and $\hat{y}=x$. Let us observe that
$$
\int_{V\setminus W} G_V f(v) P_W(y,v)\,dv \le \int_{V\setminus W} G_V f(v) P_W(x,v)\,dv\,.
$$
Indeed, it follows from the fact, that for any $v\in V\setminus W$ one has 
\begin{eqnarray*}
P_W(y,v)
&=& \frac{C_\alpha^{1}}{|v-y|} \frac{(r^2-|y-z|^2)^{\alpha/2}}{(|v-z|^2-r^2)^{\alpha/2}}\\
&=& \frac{C_\alpha^{1}}{|v-y|} \frac{(r^2-|x-z|^2)^{\alpha/2}}{(|v-z|^2-r^2)^{\alpha/2}}\\ 
&\le& \frac{C_\alpha^{1}}{|v-x|} \frac{(r^2-|x-z|^2)^{\alpha/2}}{(|v-z|^2-r^2)^{\alpha/2}} = P_W(x,v)\,.
\end{eqnarray*}
Next we shall show that
$$
G_W f(y) \le G_W f(x)\,.
$$
Note that the Green function $G_W$ satisfies the following symmetry properties for any $v \in W$
\begin{eqnarray}
\label{Green1}
G_W(\hat{y},\hat{v})&=& G_W(y,v),\\
\label{Green2}
G_W(\hat{y},v)&=&G_W(y,\hat{v}).
\end{eqnarray}
Put $W_+=(z,1)$ and $W_-=(2z-1,z)$. It follows that 
\begin{eqnarray*}
G_W f(y) &=& \int_W G_W (y,v) f(v)\, dv\\
&=& \int_{W_+} G_W (y,v) f(v)\, dv+\int_{W_-} G_W (y,v) f(v)\, dv\\
&=&\int_{W_+} G_W (y,v) f(v)\, dv+\int_{W_+} G_W (y,{\hat v}) f({\hat v})\, dv\,.
\end{eqnarray*}
Similarly using $x=\hat{y}$ one obtains
\begin{eqnarray*}
G_W f(x) &=& \int_W G_W (\hat{y},v) f(v)\, dv\\
&=&\int_{W_+} G_W (\hat y,v) f(v)\, dv+\int_{W_-} G_W (\hat y,v) f(v)\, dv\\
&=&\int_{W_+} G_W (\hat y,v) f(v)\, dv+\int_{W_+} G_W (\hat y, \hat v) f(\hat v)\, dv.
\end{eqnarray*}
Using the above relation and again (\ref{Green1})-(\ref{Green2}) we get
\begin{eqnarray*}
&& G_W f(y)-G_W f(x)\\
&=& \int_{W_+} (G_W (y,v)-G_W (\hat y,v)) f(v)\, dv+\int_{W_+} (G_W (\hat y, v)-G_W (y, v)) f(\hat v)\, dv\\
&=& \int_{W_+}(G_W (y,v)-G_W (\hat y,v)) (f(v)-f(\hat v))\, dv \le 0\,, 
\end{eqnarray*}
since for any $v\in W_+$ 
\begin{eqnarray*}
f(v)-f(\hat v) \le 0\,,\\
G_W(y,v)-G_W(\hat y, v)\ge 0\,,
\end{eqnarray*}
by Corollary 3.2 from \cite{Kul}. It follows that $G_V f$ is nonincreasing on $(0,1)$.
\end{proof}

\begin{proof}[proof of Theorem \ref{multifractional}]
The problem can be formulated as required
\begin{equation}
u= G_V F u+u_0
\end{equation}
where
\begin{equation}
u_0(x)=G_V h(x)
\end{equation}
and 
\begin{equation}
G_V f(x)=\int_V G_V(x,y)f(y)\,dy\;,\;\;\; F u(x)=u(x)^p,
\end{equation}
where $G_V(x,y)$ is the Green function for $V$.

Let $a \in (0,1)$, $U = (-a,a)$ and $\gamma_U$ be the constant from Lemma \ref{Green}. Let us define  for the given $a$ the cone $P$ in the space of bounded and continuous functions $BC(\overline{V})$:
\begin{equation*}
P=\{u \in BC(\overline{V}): u \ge 0, \inf_U  u \ge \gamma_U \sup_{\overline{V}} u, \, \, \text{$u$ is symmetric and unimodal on $\overline{V}$}\}\,.
\end{equation*}
We will show that the cone $P$ is invariant under $B = G_V F$. Indeed, $B$ maps the set of bounded, continuous and nonnegative functions on $\overline{V}$ into itself.  Lemma \ref{symuni} gives that $B$ preserves symmetry and unimodality. What is more, for any $x \in U$ by Lemma \ref{Green} we have
\begin{eqnarray*}
B(u)(x) &=& \int_{-1}^1 G_V(x,y) u^p(y) \, dy \\
&\ge& \gamma_U \int_{-1}^1 G_V(0,y) u^p(y) \, dy = \gamma_U B(u)(0).
\end{eqnarray*}
It follows that $P$ is invariant under $B = G F$.

$B = G_V F$ satisfies the following coercivity condition with sup norms
\begin{eqnarray*}
\inf_{|u|=1, u \in P} |B(u)| &=&
\inf_{|u|=1, u \in P} \sup_{x \in V} \int_{-1}^1 G_V(x,y) u^p(y) \, dy \\
&\ge& \inf_{|u|=1, u \in P} \int_{-a}^a G_V(0,y) u^p(y) \, dy \\
&\ge& \inf_{|u|=1, u \in P} \int_{-a}^a G_V(0,y) \gamma_U^p |u|^p \, dy \\
&\ge& \gamma_U^{p} \int_{-a}^a G_V(0,y) \, dy > 0.
\end{eqnarray*}
We also have
\begin{eqnarray*}
\sup_{|u|=1, u \in P} |B(u)| &=&
\sup_{|u|=1, u \in P} \sup_{x \in V} \int_{-1}^1 G_V(x,y) u^p(y) \, dy \\
&\le&  \sup_{x \in V} \int_{-1}^1 G_V(x,y) \, dy < \infty.
\end{eqnarray*}
Hence $B:P \to P$ satisfies (\ref{coe}). Recall that the operator $B$ is compact (see Preliminaries).
Since (\ref{cbf}) is also satisfied Theorem \ref{main} gives that there exists at least two solutions in $P$ of
$$
u = B(u) + u_0.
$$
This equation may be rewritten as
$$
u = G_V(u^p + h). 
$$
Lemma 5.3 in \cite{BB} implies that the solution of this equation is a weak solution of (\ref{frac1})-(\ref{frac2}), which turns out due to the classical bootstrap argument that it is a classical one if we assume the function $h$ to be H\"older regular of order $\gamma>2-\alpha$, cf. \cite{ROS}. So we finally proved that there exists at least two solutions of (\ref{frac1})-(\ref{frac2}).
\end{proof}

The global solvability of some related problem under different conditions guaranteeing the integral operator to be a global diffeomorphism was considered in \cite{Bfr}.

%%%%%%%%%%%%%%%%%%%%%%%%%%%%%%%%%%%%%%%%%%%%%%%%%%%%%%%%%%%%%%%%%%%%%

%%%%%%%%%%%%%%%%%%%%%%%%%%%%%%%%%%%%%%%%%%%%%%%%%%%%%%%%%%%%%%%%%%%%%
\end{document}